\newtheorem{proposition}{Proposition}[section]
\newtheorem{theorem}[proposition]{Theorem}
\newtheorem{corollary}[proposition]{Corollary}
\newtheorem{lemma}[proposition]{Lemma}
\theoremstyle{remark}
\newtheorem{remark}[proposition]{Remark}
\newcommand{\lo}{\mathrm{o}}
\newcommand{\bO}{\mathrm{O}}
\newcommand{\ZZ}{\mathbb{Z}}
\newcommand{\CC}{\mathbb{C}}
\newcommand{\PSL}{\mathrm{PSL}}
\newcommand{\Seq}{\mathrm{Seq}}
\newcommand{\PSeq}{\mathrm{PSeq}}
\newcommand{\Nec}{\mathrm{Nec}}
\newcommand{\PNec}{\mathrm{PNec}}
\begin{document}

\title[Low-lying geodesics on the modular surface and necklaces]{Low-lying geodesics on the modular surface \\ and necklaces}

\author[Ara Basmajian]{Ara Basmajian}
\thanks{AB is supported by PSC CUNY Award 65245-00 53}
\address[Ara Basmajian]{The Graduate Center, CUNY, 365 Fifth Ave., N.Y., N.Y., 10016, USA,
and Hunter College, CUNY, 695 Park Ave., N.Y., N.Y., 10065, USA}
\email{abasmajian@gc.cuny.edu}

\author[Mingkun Liu]{Mingkun Liu}
\address[Mingkun Liu]{DMATH, FSTM, University of Luxembourg, Esch-sur-Alzette, Luxembourg}
\email{mingkun.liu@uni.lu}
\thanks{ML is supported by the Luxembourg National Research Fund OPEN grant O19/13865598}

\keywords{asymptotic growth, binary words, closed geodesics, 
low-lying geodesic, modular group, reciprocal geodesic}
\subjclass[2020]{Primary 20F69, 32G15, 57K20; Secondary 20H10, 53C22}
\date{\today}

\begin{abstract}
    The $m$-thick part of the modular surface $X$ is the smallest compact subsurface of $X$ with horocycle boundary containing all the closed geodesics which wind around the cusp at most $m$ times.
    The $m$-thick parts form a compact exhaustion of $X$.
    We are interested in the geodesics that lie in the $m$-thick part (so called \emph{$m$ low-lying  geodesics}).
    We produce a complete asymptotic expansion for the number of $m$ low-lying geodesics of length equal to $2n$ in the modular surface.
    In particular, we obtain the asymptotic growth rate of the $m$ low-lying geodesics in terms of their word length using the natural generators of the modular group.
    After establishing a correspondence between this counting problem and the problem of counting necklaces with $n$ beads, we perform a careful singularity analysis on the associated generating function of the sequence. 
\end{abstract}

\maketitle

\section{Introduction}
\begin{figure}[ht]
    \centering
    \begin{overpic}[width=.7\textwidth]{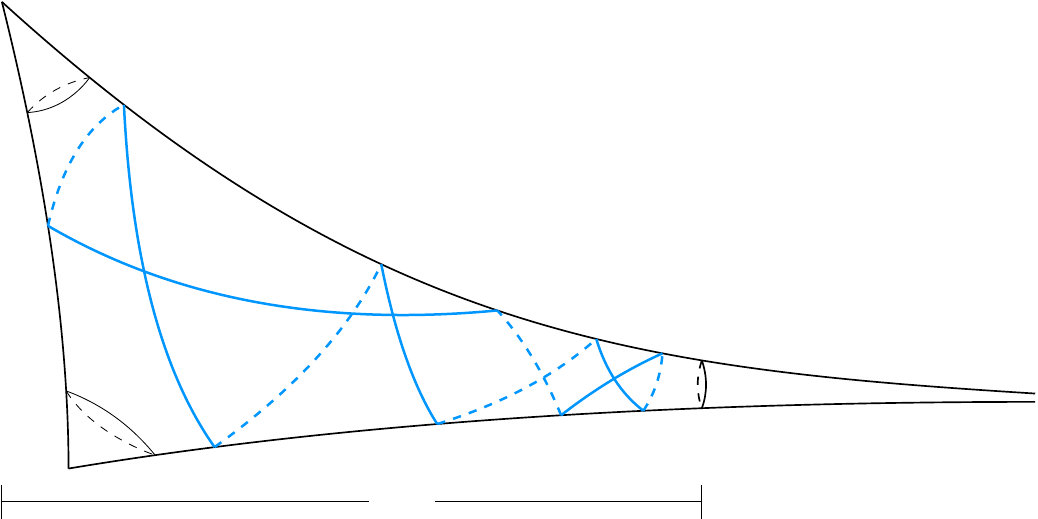}
        \put(40,30){$X$}
        \put(36.5,0.5){$X_m$}
    \end{overpic}
    \caption{An $m$ low-lying geodesic}
    \label{fig:lowlying}
\end{figure}
Consider the $(2,3,\infty)$ triangle group;
that is, the modular surface $X \coloneqq \mathbb{H}/\PSL(2, \ZZ)$.
There are many interesting classes of closed geodesics on $X$ including so-called reciprocal geodesics,
ones that stay in a fixed compact subsurface of $X$, and ones that exclusively leave a compact subsurface,
as well as of course the set of all closed geodesics on $X$.
Our interest in this paper is on the growth rate of the closed geodesics that stay in a fixed compact subsurface of $X$.
To be precise,
let $\CC \subset X$ be the cusp with its natural horocycle boundary  of length one in $X$.
For $m$ a positive integer, we define the \emph{$m$-thick part} of $X$, denoted $X_m$, to be the smallest compact subsurface of $X$ with horocycle boundary which contains all the closed geodesics which wind around the cusp at most $m$-times.
The $m$-thick parts form a compact exhaustion of $X$.
We are interested in the geodesics that lie in the $m$-thick part (so called \emph{$m$ low-lying  geodesics}).
See Figure~\ref{fig:lowlying}.

Using the fact that $\ZZ_2 \ast \ZZ_3$ is isomorphic to the modular group, we use word length with respect to the natural generators of the factors in $\ZZ_2 \ast \ZZ_3$ to study the growth of the $m$ low-lying geodesics.
In \cite{BasmajianValli1}, it was shown that 
\[
    |\{ \gamma \text{ a closed geodesic in $X_{m}$} : |\gamma| = 2n \}|
    \gtrsim
    \frac{2^{n(1-1/m)-1}}{n}
\]
as $n \to \infty$.
Our main result in this paper, Theorem~\ref{thm:main}, is a complete asymptotic analysis of the number of $m$ low-lying geodesics as well as the primitive ones.
Let $\alpha_m$ be the unique positive real solution of the equation $x^m - x^{m-1} - \cdots - x - 1 = 0$.
Let us mention that this equation is the characteristic equation of the generalized Fibonacci sequence, and $\alpha_m$ is a \emph{Pisot number}, namely, it is a real algebraic integer strictly greater than $1$, with all its Galois conjugates having modulus strictly less than $1$.

\begin{theorem} \label{thm:main}
    For any $k \in \ZZ_{\geq 1}$ and $m \in \ZZ_{\geq 3}$, when $n \to \infty$, we have
    \begin{equation} \label{eq:Aasymp}
        |\{ \gamma \text{ a closed geodesic in $X_{m}$} : |\gamma| = 2n \}|
        =
        \frac{1}{n}
        \sum_{d \mkern3mu \mid \mkern2mu n, \, d \leq k} \varphi(d) \, \alpha_m^{n/d} + 
        \bO \bigg( \frac{\alpha_m^{n/(k+1)}}{n} \bigg)
    \end{equation}
    where $\varphi$ is the Euler's totient function;
    and for primitive geodesics, we have
    \begin{equation} \label{eq:Basymp}
        |\{ \gamma \text{ a primitive closed geodesic in $X_{m}$} : |\gamma| = 2n \}|
        =
        \frac{1}{n}
        \sum_{d \mkern3mu \mid \mkern2mu n, \, d \leq k} \mu(d) \, \alpha_m^{n/d} + 
        \bO \bigg( \frac{\alpha_m^{n/(k+1)}}{n} \bigg)
    \end{equation}
    where $\mu$ stands for the Möbius function.
\end{theorem}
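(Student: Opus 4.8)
The plan is to convert the problem into one about necklaces and then to locate and analyze the dominant singularity of the associated generating function.

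\emph{Step 1 (combinatorial translation).} Write $\PSL(2,\ZZ)\cong\ZZ_2\ast\ZZ_3=\langle S\rangle\ast\langle U\rangle$ with $S^{2}=U^{3}=1$, and let $T=SU$ be the parabolic fixing the cusp $\CC$. A closed geodesic of word length $2n$ corresponds to a conjugacy class of a cyclically reduced hyperbolic element, which (being of even length) has normal form $SU^{\epsilon_1}SU^{\epsilon_2}\cdots SU^{\epsilon_n}$ with each $\epsilon_i\in\{1,2\}$; two such are conjugate exactly when the tuples $(\epsilon_1,\dots,\epsilon_n)$ agree up to a cyclic rotation, and the geodesic winds around $\CC$ a number of times equal to the length of the longest cyclic run of equal consecutive $\epsilon_i$ (a run of length $j$ being a block $(SU)^j=T^{j}$ or $(SU^{2})^j$). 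Hence $a_n$, the number of $m$ low-lying closed geodesics of length $2n$, is the number of $2$-colored necklaces with $n$ beads all of whose cyclic runs have length $\leq m$, and $p_n$ is the number of such necklaces that are aperiodic. This is the correspondence referred to in the abstract.

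\emph{Step 2 (the circular-word count).} Let $W(n)$ be the number of \emph{circular} $2$-colored words of length $n$ with all cyclic runs $\leq m$, so that necklaces are $W$-words modulo rotation. A transfer-matrix argument with states $(\text{color},\text{length of current run})$, the run length ranging over $1,\dots,m$, gives $W(n)=\operatorname{tr}(M^{n})$; equivalently $\sum_{n\geq1}W(n)z^{n}$ is rational with denominator $1-z-z^{2}-\cdots-z^{m}$, whose reciprocal polynomial is the generalized Fibonacci polynomial $x^{m}-x^{m-1}-\cdots-1$ with dominant root $\alpha_m$. Because $\alpha_m$ is a Pisot number, $1/\alpha_m$ is the unique root of minimal modulus of $1-z-\cdots-z^{m}$ and every other root lies strictly outside the closed unit disk; together with a check that the color-antisymmetric block of $M$ has spectral radius $<1$, this yields $W(n)=\alpha_m^{\,n}+\bO(\theta^{\,n})$ for a fixed $\theta\in(0,1)$. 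Verifying that $\alpha_m$ is the \emph{only} eigenvalue of $M$ of maximal modulus — that the full transfer matrix inherits the spectral gap forced by the Pisot property — is the main technical point; the hypothesis $m\geq 3$ is what keeps the geometric dictionary of Step 1 nondegenerate.

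\emph{Step 3 (singularity analysis for $a_n$).} Burnside's lemma applied to the rotation action of $\ZZ/n$ gives, for $n>m$,
\[
    a_n=\frac1n\sum_{d\mid n}\varphi(d)\,W(n/d)+\bO(1),
\]
the $\bO(1)$ correcting the ``seam'': a periodic circular word of period $n/d\leq m$ creates a cyclic run of length $n>m$ and must be discarded. Setting $G(z)=\sum_{j\geq1}\tfrac{W(j)}{j}z^{j}=\log\tfrac1{1-\alpha_m z}+H(z)$ with $H$ holomorphic on a disk of radius $>1/\alpha_m$, the necklace generating function is $A(z)=\sum_{d\geq1}\tfrac{\varphi(d)}{d}G(z^{d})$, whose dominant singularity is the logarithmic one at $z=1/\alpha_m$ coming from the $d=1$ term. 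Extracting coefficients termwise, $[z^{n}]\sum_{d\geq1}\tfrac{\varphi(d)}{d}\log\tfrac1{1-\alpha_m z^{d}}=\tfrac1n\sum_{d\mid n}\varphi(d)\alpha_m^{n/d}$, while the $H$-contribution and the seam term are each $\bO(1)$. Truncating the divisor sum at $d\leq k$, the discarded part $\tfrac1n\sum_{d\mid n,\,d>k}\varphi(d)\alpha_m^{n/d}$ is dominated by its $d=k+1$ term $\tfrac{\varphi(k+1)}{n}\alpha_m^{n/(k+1)}$ (present only when $(k+1)\mid n$), the remaining terms being $\bO(\tau(n)\,\alpha_m^{n/(k+2)})=\lo(\alpha_m^{n/(k+1)}/n)$ since $\alpha_m^{n/((k+1)(k+2))}$ outgrows every power of $n$; the $\bO(1)$ errors are likewise $\lo(\alpha_m^{n/(k+1)}/n)$ because $\alpha_m>1$. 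This gives \eqref{eq:Aasymp}.

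\emph{Step 4 (the primitive count).} For $n>m$ every low-lying necklace of length $n$ is uniquely a power of a primitive low-lying necklace of some length $d\mid n$ with $d\geq2$, so $a_n=\sum_{d\mid n}p_d$ (with $p_1:=0$) up to an $\bO(1)$ correction absorbing the finitely many small $n$. Möbius inversion together with Step 3 gives $p_n=\sum_{d\mid n}\mu(n/d)a_d+\bO(1)=\sum_{d\mid n}\mu(n/d)\tfrac1d\sum_{e\mid d}\varphi(e)\alpha_m^{d/e}+\bO(\tau(n))$. Interchanging the two summations and invoking the elementary identity $\sum_{d'\mid N}\mu(N/d')\tfrac{\varphi(d')}{d'}=\tfrac{\mu(N)}{N}$ (checked on prime powers) collapses the double sum to $\tfrac1n\sum_{d\mid n}\mu(d)\alpha_m^{n/d}$; truncating at $d\leq k$ and bounding the tail exactly as in Step 3 yields \eqref{eq:Basymp}, the $\bO(\tau(n))$ being absorbed since $\alpha_m^{n/(k+1)}/(n\,\tau(n))\to\infty$.
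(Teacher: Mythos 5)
Your argument reaches the same asymptotics as the paper but by a more elementary route. The paper computes the explicit rational generating function $F_m(z)=\Seq_m(z)$ via a run decomposition (Lemma~\ref{lem:Seq}), passes to necklaces through Möbius inversion and the $u^k\mapsto u^k/k$ transform (Proposition~\ref{prop:Nec}), and then performs genuine singularity analysis: it splits $\Nec_m(z)$ into $\sum_k\frac{\varphi(k)}{k}\log\frac{1}{1-z^k/r}$ plus a remainder shown to be holomorphic past the relevant radius (Lemmas~\ref{lem:Nec2}--\ref{lem:h}), and extracts coefficients with Cauchy's inequality. You instead use a transfer matrix for the circular word count $W(n)$, Burnside's lemma for $a_n=\frac1n\sum_{d\mid n}\varphi(d)W(n/d)$, and direct divisor-sum manipulation; your identity $\sum_{d'\mid N}\mu(N/d')\varphi(d')/d'=\mu(N)/N$ correctly collapses the double sum for the primitive count, mirroring the paper's use of $\sum_{d\mid k}\mu(d)/d=\varphi(k)/k$ in the opposite direction. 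A genuine advantage of your route is that the trace formula gives for free that $\alpha_m$ enters $W(n)$ with coefficient exactly $1$ (simplicity of the Perron eigenvalue), which is the content of the paper's residue computation in Lemma~\ref{lem:sep}.

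There is, however, one false assertion, and it is precisely the point you flag as ``the main technical point'': the color-antisymmetric block of $M$ (the run-extension shift minus the rank-one color-switch map) does \emph{not} have spectral radius $<1$. Its characteristic polynomial is $x^m+x^{m-1}+\cdots+x+1$, so its eigenvalues are the nontrivial $(m+1)$-st roots of unity, all of modulus exactly $1$; this is visible in the paper as the factor $z^{m+1}-1$ in the denominator of $F_m$ in \eqref{eq:Fm}, whose roots are genuine poles on the unit circle. Hence $W(n)=\alpha_m^{\,n}+\bO(\theta^n)$ with $\theta<1$ is wrong: $W(n)-\alpha_m^{\,n}$ contains the bounded oscillating term $\sum_{\omega^{m+1}=1,\,\omega\ne1}\omega^n$, equal to $m$ when $(m+1)\mid n$ and to $-1$ otherwise. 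The Pisot property gives a spectral gap below $\alpha_m$, not below $1$. Your proof survives because all you actually need is $W(n)=\alpha_m^{\,n}+\bO(1)$: then, using $\sum_{d\mid n}\varphi(d)=n$, the Burnside average of the error is $\bO(1)$, which is still $\lo(\alpha_m^{\,n/(k+1)}/n)$. Correspondingly, your $H$ is holomorphic only in $|z|<1$ (with logarithmic singularities on the unit circle), not in any larger disk, but a circle of radius $\rho\in(r^{1/(k+1)},1)$ still yields coefficient bounds $\bO(\rho^{-n})=\lo(\alpha_m^{\,n/(k+1)}/n)$. You should correct the spectral claim and rerun the estimates with the $\bO(1)$ error; nothing else in your argument changes.
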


Some corollaries follow, 
\begin{corollary}
    For any $m \in \ZZ_{\geq 3}$,
    we have
    \[
        |\{ \gamma \text{ a closed geodesic in $X_{m}$} : |\gamma| = 2n \}|
        \sim
        \frac{\alpha_m^n}{n}
    \]
    as $n \to \infty$.
    The same conclusion holds for primitive closed geodesics.
\end{corollary}

\begin{corollary}
    The asymptotic growth rate of the primitive closed geodesics in $X_m$ converges to the asymptotic growth rate of the primitive closed geodesics on the modular orbifold, as $m \to \infty$.
\end{corollary}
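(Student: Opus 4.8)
The plan is to reduce the statement to three facts. First, by the preceding Corollary the number of primitive closed geodesics in $X_m$ of length $2n$ is $\sim \alpha_m^n/n$, so the exponential growth rate of the primitive closed geodesics in $X_m$ — the base appearing in this asymptotic, measured against the half word length $n$ — is exactly $\alpha_m$. Second, I claim the corresponding growth rate for the full modular orbifold $X$ equals $2$: via the necklace correspondence underlying Theorem~\ref{thm:main}, a primitive closed geodesic on $X$ of word length $2n$ corresponds to an aperiodic binary necklace with $n$ beads — this is precisely the limiting situation in which the $m$-thickness constraint on the admissible necklaces becomes vacuous — and the number of such necklaces is $\frac1n\sum_{d \mid n}\mu(d)\,2^{n/d} \sim 2^n/n$. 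Equivalently, formula~\eqref{eq:Basymp} persists ``at $m=\infty$'' with $\alpha_\infty = 2$. Third, and this is the only genuine computation, one must check that $\alpha_m \to 2$ as $m \to \infty$; combining the three facts then yields the asserted convergence of growth rates.

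To prove $\alpha_m \to 2$, I would multiply the defining relation $x^m - x^{m-1} - \cdots - x - 1 = 0$ by $(x-1)$ to obtain the equivalent relation $x^{m+1} - 2x^m + 1 = 0$; evaluating at $x = \alpha_m$ gives $2 - \alpha_m = \alpha_m^{-m}$. Writing $p_m(x) = x^m - x^{m-1} - \cdots - x - 1$, the crude estimates $p_m(3/2) = 2 - (3/2)^m < 0$ (for $m \geq 2$) and $p_m(2) = 1 > 0$ confine the unique positive root to $\alpha_m \in (3/2, 2)$, whence $0 < 2 - \alpha_m = \alpha_m^{-m} < (2/3)^m \to 0$; and since $p_{m+1}(\alpha_m) = \alpha_m\, p_m(\alpha_m) - 1 = -1 < 0$ one even gets that $(\alpha_m)$ increases to $2$.

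I do not anticipate a serious obstacle: once Theorem~\ref{thm:main} (or even just the preceding Corollary) is in hand, the content is soft. The one point requiring care is bookkeeping rather than analysis — one must make sure that ``the asymptotic growth rate of the primitive closed geodesics on the modular orbifold'' is taken with respect to exactly the same notion of length as in Theorem~\ref{thm:main} (syllable/word length in $\ZZ_2 \ast \ZZ_3$, geodesics of length $2n$), and that the unrestricted aperiodic binary necklace count genuinely enumerates the primitive hyperbolic conjugacy classes in $\PSL(2,\ZZ)$ of that length: that every cyclically reduced alternating word of syllable length $2n$ is hyperbolic, and that distinct aperiodic necklaces give distinct primitive closed geodesics. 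These are standard facts for the $(2,3,\infty)$ triangle group, but they should be recorded explicitly so that the $m\to\infty$ limit is matched against the correct quantity.
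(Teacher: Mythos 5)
Your proposal is correct and follows exactly the route the paper intends (the corollary is left unproved in the text, but the ingredients are all present there): the growth rate $\alpha_m$ comes from the preceding corollary, the unrestricted aperiodic-necklace count $\frac{1}{n}\sum_{d\mid n}\mu(d)2^{n/d}\sim 2^n/n$ identifies the orbifold growth rate as $2$, and the limit $\alpha_m = 1/r_m \to 2$ is the content of the paper's remark that $r_m \to 1/2$. Your self-contained verification that $2-\alpha_m = \alpha_m^{-m} < (2/3)^m$ and that $(\alpha_m)$ increases to $2$ is a correct and slightly more explicit substitute for that remark.
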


There is an extensive literature on cusp excursions by a random geodesic on a hyperbolic surface including the papers \cite{Haas1, Haas2, Haas3, Haas4, Haas5, MelianPestana, Mor, Pollicott, RandeckerTiozzo, Stratmann, Sullivan, Trin}.
In particular, the papers \cite{Sullivan, Haas1, Haas2, Haas3, Haas4, Haas5} investigate the relation between depth, return time, and other invariants in various contexts including connections to number theory.
Papers involving growth of particular families of geodesics include \cite{BocaPasolPopaZaharescu, BasmajianValli2, Erlandsson, ErlandssonParlierSouto, ErlandssonSouto1, Mirzakhani, Sarnak}.
Geometric length growth of low-lying geodesics having the arithmetic condition of being fundamental is studied in \cite{BourgainKontorovich1, BourgainKontorovich2}. 
For hyperbolic geometry we use \cite{Buser} for a basic reference, and for combinatorial analysis \cite{FlajoletSedgewick}.

\subsection*{Acknowledgements}
We would like to thank Naomi~Bredon, Christian~El~Emam, Alex~Nolte, and Nathaniel~Sagman for useful discussions.
This project started during the first author's visit to the University of Luxembourg. It is a pleasure to thank the University of Luxembourg and Hugo~Parlier for their support and hospitality during that time.

\section{Necklaces and low-lying geodesics}
In this section we establish a correspondence between low-lying geodesics and  necklaces.  
A (binary) \emph{necklace} is made of a circular pattern of beads, each bead being one of two colors, say red or black, with the constraint that the number of consecutive adjacent beads of the same color being at most $m$.
Two necklaces are considered the same if they differ by a cyclic rotation.
It is not difficult to see that the set of all $m$ low-lying geodesics of length $n$ is in one-to-one correspondence with the set of necklaces made of $n$ beads with the longest run of the same color being at most $m$.
We denote the number of such necklaces of length $n$ by $A_m(n)$,
and by $B_m(n)$ the number of primitive ones.

In this paper, we give a complete asymptotic analysis of the number of $m$ low-lying geodesics.
Call the generator of the $\mathbb{Z}_2$ factor $a$, and the generator of the $\mathbb{Z}_3$ factor $b$.
Using the generators, $\{a,b,b^{-1}\}$ we define the length of a closed geodesic $\gamma$ on $X$, denoted $| \gamma |$, to be the minimal word length in the conjugacy class of a lift in $\PSL(2, \ZZ)$.
Any hyperbolic element can be conjugated into the normal form $ab^{\epsilon_1} ab^{\epsilon_2} \cdots ab^{\epsilon_n}$, and the normal forms realize the minimum word length;
hence the word length of a closed geodesic is necessarily even. 
Noting that the normal form is a product of parabolic elements (which represent going around the cusp), we are able to conclude how deep a geodesic wanders into the cusp by looking at the exponents of these parabolics.
Namely, staying in the compact subsurface $X_{m}$ is equivalent to not having a run of $\epsilon_{i}'s$ longer than $m$.
See for example Lemma~7.1 in \cite{BasmajianValli1} for a precise statement.
Hence there is a one-to-one correspondence between low-lying closed geodesics of length $2n$ and conjugacy classes of so called \emph{low-lying words} in the modular group.
Namely, a lift of a low-lying geodesic corresponds to a conjugacy class of hyperbolic elements in $\PSL(2, \ZZ)$.
Now such a conjugacy class has a normal form representative $ab^{\epsilon_1} ab^{\epsilon_2} \cdots ab^{\epsilon_n}$, where the number of consecutive $\epsilon_i$ of the same sign is at most $m$.
Assigning the color black to $+1$, and the color red to $-1$, we get a mapping 
\[
    (\epsilon_{1}, \dots, \epsilon_{n})
    \longmapsto
    ab^{\epsilon_1}ab^{\epsilon_2} \cdots ab^{\epsilon_n}
\]
which respects cyclic equivalence between the domain and range. 
We have shown,
\begin{proposition} \label{prop:geodToNecklace}
    For any $m \in \ZZ_{\geq 1}$, we have
    \[
        A_{m}(n)
        =
        |\{ \gamma \text{ a closed geodesic in $X_{m \geq 3}$} : |\gamma| = 2n \}|,
    \]
    and 
    \[
        B_{m}(n)
        =
        |\{ \gamma \text{ a primitive closed geodesic in $X_{m \geq 3}$} : |\gamma| = 2n \}|.
    \]
\end{proposition}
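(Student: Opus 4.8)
The plan is to make rigorous the correspondence already sketched informally in the paragraph preceding the proposition. The statement of Proposition~\ref{prop:geodToNecklace} is essentially the claim that the composite map $(\epsilon_1,\dots,\epsilon_n)\mapsto ab^{\epsilon_1}\cdots ab^{\epsilon_n}$ descends to a bijection between $m$-bounded necklaces on $n$ beads and conjugacy classes of low-lying hyperbolic elements of length $2n$, and, under the identification of Lemma~7.1 of \cite{BasmajianValli1}, these conjugacy classes index exactly the closed geodesics in $X_m$ of length $2n$. So first I would recall the normal form: every hyperbolic element of $\PSL(2,\ZZ)$ is conjugate to a word $ab^{\epsilon_1}ab^{\epsilon_2}\cdots ab^{\epsilon_n}$ with each $\epsilon_i\in\{+1,-1\}$, this word has word length $2n$ with respect to $\{a,b,b^{-1}\}$, and $2n$ is minimal in the conjugacy class; hence $|\gamma|=2n$ corresponds precisely to conjugacy classes admitting such a representative with this value of $n$. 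Cyclic permutations $(\epsilon_1,\dots,\epsilon_n)\mapsto(\epsilon_2,\dots,\epsilon_n,\epsilon_1)$ of the exponent string give conjugate group elements (conjugate by $ab^{\epsilon_1}$), so the map factors through cyclic equivalence classes, i.e. through necklaces.

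Second, I would prove injectivity: if two exponent strings yield conjugate elements, I must show they are cyclic rotations of one another. This follows from uniqueness of the normal form up to cyclic permutation. Concretely, in the free product $\ZZ_2\ast\ZZ_3$ a cyclically reduced word is unique up to cyclic permutation within its conjugacy class, and each $ab^{\epsilon_1}\cdots ab^{\epsilon_n}$ is already cyclically reduced (it neither begins and ends with powers of the same free factor in a cancelling way, since it begins with $a$ and ends with $b^{\pm1}$); so conjugacy of two such words forces their defining strings to be cyclic rotations of each other. Surjectivity onto conjugacy classes of hyperbolic elements of length $2n$ is the normal-form statement again. Third, I would impose the low-lying condition: by Lemma~7.1 of \cite{BasmajianValli1}, the geodesic corresponding to $ab^{\epsilon_1}\cdots ab^{\epsilon_n}$ stays in $X_m$ exactly when no run of equal consecutive $\epsilon_i$ (read cyclically) has length exceeding $m$; under the color assignment $+1\mapsto$ black, $-1\mapsto$ red this is exactly the defining constraint of an $m$-bounded necklace. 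Matching the two sides gives $A_m(n)=|\{\gamma\subset X_m:|\gamma|=2n\}|$. For the primitive count, I observe that a closed geodesic is primitive iff the corresponding conjugacy class is not a proper power, iff the exponent string is aperiodic (not a nontrivial concatenation of copies of a shorter string), iff the associated necklace is primitive (aperiodic); hence $B_m(n)$ counts exactly the primitive low-lying geodesics of length $2n$.

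The one point that needs care — and is the main obstacle to writing this cleanly — is the interplay between conjugacy in $\PSL(2,\ZZ)$ and the geometric notion of a closed geodesic: a closed geodesic corresponds to a conjugacy class of hyperbolic elements up to inversion, since $\gamma$ and $\gamma^{-1}$ are the same unoriented geodesic (or, depending on convention, one works with oriented geodesics and no inversion is needed). I would check that the bijection respects whichever convention is in force; if unoriented geodesics are meant, one notes that inverting $ab^{\epsilon_1}\cdots ab^{\epsilon_n}$ and recyclizing produces the reversed-and-negated string, and one must confirm the counts $A_m(n)$, $B_m(n)$ are being defined on the same footing (the cited one-to-one correspondence in \cite{BasmajianValli1} already fixes this, so it suffices to quote it). Everything else is bookkeeping: recalling the normal form, noting cyclic conjugation, invoking the low-lying criterion, and translating primitivity into aperiodicity. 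I would present it as a short sequence of three bijective identifications — exponent strings up to cyclic rotation $\leftrightarrow$ hyperbolic conjugacy classes of length $2n$ $\leftrightarrow$ closed geodesics in $X_m$ of length $2n$ — with the restriction to $m$-bounded strings tracked throughout, and then read off both displayed equalities.
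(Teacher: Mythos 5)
Your proposal is correct and follows essentially the same route as the paper, whose ``proof'' is just the informal paragraph preceding the proposition: normal form $ab^{\epsilon_1}\cdots ab^{\epsilon_n}$ for hyperbolic conjugacy classes, cyclic equivalence on both sides, the low-lying criterion from Lemma~7.1 of \cite{BasmajianValli1}, and the color assignment $\pm 1 \mapsto$ black/red. The extra details you supply (injectivity via uniqueness of cyclically reduced words in the free product, primitivity as aperiodicity, and the oriented-versus-unoriented convention) are exactly the points the paper leaves implicit, so your write-up is a faithful, slightly more careful version of the same argument.
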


\section{Generating Functions}
Let $m \in \ZZ_{\geq 2}$.
For technical reasons, instead of working with $A_m(n)$ (resp.\,$B_m(n)$), we consider $\hat{A}_m(n)$ (resp.\,$\hat{B}_m(n)$), the number of $m$-necklaces (resp.\,primitive $m$-necklaces) of size $n$ that are \emph{nonconstant}.
We have
\[
    \hat{A}_m(n)
    =
    \begin{cases}
        A_m(n) - 2 & \text{if } 1 \leq n \leq m, \\
        A_m(n) & \text{if } n > m,
    \end{cases}
    \quad
    \text{and}
    \quad
    \hat{B}_m(n)
    =
    \begin{cases}
        B_m(n) - 2 = 0 & \text{if } n = 1, \\
        B_m(n) & \text{if } n > 1.
    \end{cases}
\]
In particular, $\hat{A}_m(n)$ (resp.\ $\hat{B}_m(n)$) and $A_m(n)$ (resp.\ $B_m(n)$) have the same asymptotic behavior.

We encode $\hat{A}_m(n)$ and $\hat{B}_m(n)$ into two generating functions $\Nec_m(z)$ and $\PNec_m(z)$, respectively, defined by
\[
    \Nec_m(z)
    =
    \sum_{n=1}^{\infty} \hat{A}_m(n) \, z^n,
    \qquad
    \PNec_m(z)
    =
    \sum_{n=1}^{\infty} \hat{B}_m(n) \, z^n.
\]
The numbers $\hat{A}_m(n)$ and $\hat{B}_m(n)$ can be recovered by extracting the coefficient of $z^n$ in the functions $\Nec_m(z)$ and $\PNec_m(z)$ respectively:
\[
    \hat{A}_m(n)
    =
    [z^n] \, \Nec_m(z),
    \qquad
    \hat{B}_m(n)
    =
    [z^n] \, \PNec_m(z)
\]
where $[z^n]$ stands for the coefficient extractor.

Define
\begin{equation} \label{eq:Fm}
    F_m(z)
    \coloneqq
    \frac{2 z^2 (1 - z^m) (m z^{m+1} -(m+1)z^m + 1)}{(z-1) (z^{m+1} - 1) (z^{m+1} - 2z + 1)}.
\end{equation}
\begin{proposition} \label{prop:Nec}
    We have formulas
    \begin{equation} \label{eq:PNec}
        \PNec_m(z)
        =
        \sum_{i=1}^{\infty} \mu(i) \int_0^1
        \frac{F_m((xz)^i)}{x} \, dx.
    \end{equation}
    and
    \begin{equation} \label{eq:Nec}
        \mathrm{Nec}_m(z)
        =
        \sum_{j=1}^{\infty}
        \sum_{i=1}^{\infty} \mu(i)
        \int_{0}^{1} 
        \frac{F_m((x z^j)^i)}{x} \, dx
    \end{equation}
    where $y = (xz^j)^i$ and $\mu$ stands for the Möbius function.
\end{proposition}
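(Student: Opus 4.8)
The plan is to build $\mathrm{Nec}_m$ and $\mathrm{PNec}_m$ from a more elementary generating function by applying the standard cyclic/Burnside bookkeeping, so the first step is to find the ordinary generating function for the \emph{linear} analogue. Let $\hat{L}_m(n)$ be the number of \emph{nonconstant} binary words of length $n$ with no run of the same color longer than $m$, viewed as words arranged on a line (not up to rotation); I would first show that $\sum_{n\ge 1}\hat{L}_m(n)z^n$ equals something closely related to $F_m(z)$. In fact the natural object is the generating function counting nonconstant \emph{circular} words of length $n$ with the run condition but with a marked starting bead — call it $\widetilde{F}_m(z)=\sum_{n\ge1} n\,\hat{A}^{\mathrm{marked}}_m(n)z^n$ where $\hat{A}^{\mathrm{marked}}_m(n)$ counts such circular words with one of the $n$ positions distinguished. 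A transfer-matrix / cycle-lemma computation on the run-length-bounded constraint (decompose a circular word into maximal monochromatic blocks of lengths in $\{1,\dots,m\}$, alternating colors, and mark a bead) gives a closed rational expression; I claim this is exactly $F_m(z)$ in \eqref{eq:Fm}. Verifying this identity — matching the factor $(z-1)(z^{m+1}-1)(z^{m+1}-2z+1)$ in the denominator against the block decomposition — is the most computational part, but it is routine generatingfunctionology: each maximal block contributes $z+z^2+\cdots+z^m=\frac{z(1-z^m)}{1-z}$, two colors contribute the alternation, the marked bead divides out and re-weights, and one assembles the "cycle" construction $\mathrm{CYC}$ with care about the two-block degenerate cases.

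Granting $F_m$ as the marked-circular-word series, the passage to \eqref{eq:PNec} is the primitive (aperiodic) version of Burnside: a circular word of length $n$ has a well-defined period $p\mid n$; marking a bead of a necklace of period $p$ produces $p$ distinct marked words, and summing $\hat{A}^{\mathrm{marked}}_m(n)=\sum_{p\mid n} p\cdot(\text{number of primitive necklaces of length }p)$. Writing $\mathrm{PNec}_m(z)=\sum_p \hat{B}_m(p)z^p$ and inverting the relation $\sum_n \hat{A}^{\mathrm{marked}}_m(n)z^n = F_m(z)$ by Möbius inversion over the divisor lattice, one gets $\sum_p p\,\hat{B}_m(p)z^p = \sum_{i\ge1}\mu(i)F_m(z^i)$; dividing the coefficient of $z^p$ by $p$ is exactly the operation $G(z)\mapsto \int_0^1 G(xz)\,\frac{dx}{x}$, since $[z^p]\int_0^1 G(xz)\frac{dx}{x} = [z^p]G(z)\int_0^1 x^{p-1}dx = \frac1p[z^p]G(z)$. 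Substituting $G(z)=\sum_i\mu(i)F_m(z^i)$ and moving the integral inside the sum over $i$ (legitimate on the formal-power-series level, and on $|z|<$ radius of convergence by dominated convergence) yields precisely \eqref{eq:PNec}, with the integrand $F_m((xz)^i)/x$.

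Finally, \eqref{eq:Nec} follows from \eqref{eq:PNec} by the relation between all necklaces and primitive ones: a necklace of length $n$ has a unique primitive "root" of length $n/j$ for some $j\mid n$, so $\hat{A}_m(n)=\sum_{j\mid n}\hat{B}_m(n/j)$, i.e.\ $\mathrm{Nec}_m(z)=\sum_{j\ge1}\mathrm{PNec}_m(z^j)$. Plugging in \eqref{eq:PNec} and noting that replacing $z$ by $z^j$ inside $\int_0^1 F_m((xz)^i)\frac{dx}{x}$ gives $\int_0^1 F_m((xz^j)^i)\frac{dx}{x}$ (again after the harmless change of dummy variable, keeping $x$ as the integration variable) produces the double sum in \eqref{eq:Nec}; the annotation $y=(xz^j)^i$ just records the argument of $F_m$. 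I expect the only genuine obstacle to be the first step — pinning down that the marked-circular-word generating function is \emph{exactly} $F_m(z)$ rather than merely rationally equivalent to it — which requires treating the short necklaces ($n\le m$, where the nonconstant correction $\hat A_m(n)=A_m(n)-2$ enters) carefully so the bookkeeping matches on the nose; everything after that is formal manipulation of Dirichlet-type convolutions dressed up as the integral operator $\int_0^1(\cdot)\frac{dx}{x}$.
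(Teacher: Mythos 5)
Your proposal is correct and follows essentially the same route as the paper: the paper's Lemma~\ref{lem:Seq} establishes that $F_m(z)$ is exactly the generating function of nonconstant marked circular words (its ``nonconstant $m$-sequences'') via the same alternating-block decomposition you sketch, and its proof of Proposition~\ref{prop:Nec} then performs the identical M\"obius inversion $\PSeq_m(z)=\sum_i\mu(i)F_m(z^i)$, the coefficient division $u^k\mapsto u^k/k$ realized as the operator $\int_0^1(\cdot)\,dx/x$, and the substitution $\Nec_m(z)=\sum_j\PNec_m(z^j)$. The only blemishes are cosmetic: the factor $n$ in your definition of $\widetilde F_m$ is spurious (you drop it, correctly, when you later invert $\sum_n\hat A^{\mathrm{marked}}_m(n)z^n=F_m(z)$), and the block-decomposition identity you leave as ``routine'' is carried out in the paper by splitting into an even and an odd number of blocks, with the odd case absorbing the wrap-around constraint through the factor $z^2+2z^3+\cdots+(m-1)z^m$.
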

We proceed following \cite[Appendix~A.4]{FlajoletSedgewick}.
We say a binary sequence is a \emph{non-constant $m$-sequence} if it represents a non-constant  $m$-necklace, and we denote by $W_m(n)$ the number of non-constant $m$-sequences of size $n$.
For example, $W_2(4) = 6$, and the six non-constant $2$-sequences are: $(0,0,1,1)$, $(0,1,0,1)$, $(0,1,1,0)$, $(1,0,0,1)$, $(1,0,1,0)$, and $(1,1,0,0)$.
Note that $(0,0,1,0)$, $(0,1,0,0)$, $(1,0,1,1)$, and $(1,1,0,1)$ are \emph{not} non-constant $2$-sequences.
We denote by $\Seq_m(z)$ the generating function of $W_m(n)$, namely
\[
    \Seq_m(z)
    \coloneqq
    \sum_{n=1}^{\infty} W_m(n) \, z^n.
\]
\begin{lemma} \label{lem:Seq}
    We have formula
    \begin{equation}
        \Seq_m(z)
        =
        F_m(z)
    \end{equation}
\end{lemma}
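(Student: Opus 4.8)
The plan is to compute $\Seq_m(z)$ directly by decomposing a non-constant $m$-sequence into maximal monochromatic blocks (``runs''), each of length between $1$ and $m$. A non-constant $m$-sequence of size $n$ is a \emph{cyclic} arrangement, but $W_m(n)$ counts it as a linear object with a marked starting position together with the cyclic structure — more precisely, following \cite[Appendix~A.4]{FlajoletSedgewick}, $W_m(n)$ is the number of linear binary words of length $n$ that, when read cyclically, have no run of length exceeding $m$ and are not monochromatic. The key structural observation is that such a cyclic word is an alternating cyclic sequence of runs, where the colors alternate automatically once we know there are at least two runs. So I would first set up the generating function for a single run of a fixed color, namely $R(z) = z + z^2 + \cdots + z^m = z(1-z^m)/(1-z)$, and then assemble the cyclic sequence of runs.

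The main subtlety, and the step I expect to be the principal obstacle, is correctly accounting for the cyclic boundary condition: in a linear word the first and last runs are unconstrained relative to each other, but in the cyclic reading they must have different colors (otherwise they merge into one run, which could exceed length $m$ or, in the degenerate case, make the word constant). Concretely I would fix the color of the run containing position $1$ (two choices, giving the factor $2$), write the word as that first run followed by an even number $2k$ of further alternating runs with $k \geq 1$ — wait, that is not quite it either, since the total number of runs around the cycle must be even only when... actually the number of runs in a cyclic binary word with alternating colors is always even, so if there are $2\ell$ runs total with $\ell \geq 1$, reading linearly from the start of a chosen run gives a run of one color, then $2\ell - 1$ more runs alternating, but the linear concatenation of the last and first is forbidden from being monochromatic — which is automatic here since run $2\ell$ and run $1$ have opposite colors by the alternation. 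The real constraint is simply $\ell \geq 1$ (at least two runs, i.e.\ non-constant). Thus $\Seq_m(z) = 2 \sum_{\ell \geq 1} R(z)^{2\ell} = 2 R(z)^2/(1 - R(z)^2)$, and I would then substitute $R(z) = z(1-z^m)/(1-z)$ and simplify. The factor $1 - R(z)^2 = (1-R(z))(1+R(z))$, and one computes $1 - R(z) = (1 - 2z + z^{m+1})/(1-z)$ and $1 + R(z) = (1 - z^{m+1})/(1-z)$; hence
\[
    \Seq_m(z)
    =
    \frac{2 R(z)^2}{(1-R(z))(1+R(z))}
    =
    \frac{2 z^2 (1-z^m)^2}{(1 - 2z + z^{m+1})(1 - z^{m+1})}.
\]

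To finish, I must match this against the claimed $F_m(z)$ from \eqref{eq:Fm}. The denominators agree up to sign and a factor of $(z-1)$: the stated denominator is $(z-1)(z^{m+1}-1)(z^{m+1}-2z+1)$, and $(z^{m+1}-1)(z^{m+1}-2z+1) = (1-z^{m+1})(1-2z+z^{m+1})$, so the extra $(z-1)$ in $F_m$'s denominator must be cancelled by a matching factor in its numerator. Indeed, one checks the polynomial identity
\[
    (1-z^m)(m z^{m+1} - (m+1) z^m + 1) = (1-z)(1-z^m)^2 \cdot (-1)^{?}
\]
— more carefully, $m z^{m+1} - (m+1) z^m + 1 = (1-z)\bigl(1 + z + \cdots + z^{m-1} - m z^m\bigr)$ is divisible by $(1-z)$, and I would verify that $1 + z + \cdots + z^{m-1} - m z^m$ does \emph{not} equal $-(1-z^m)$ in general, so the cancellation is more delicate and the two expressions for the numerator must be reconciled by a direct polynomial identity: expand and confirm $(z-1)\cdot(\text{their numerator over }(z-1)) $ versus $2z^2(1-z^m)^2$. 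This is the routine but essential bookkeeping step. I therefore expect the genuine mathematical content to be the run-decomposition argument with the cyclic alternation, and the remaining work to be verifying the algebraic identity $F_m(z) = 2z^2(1-z^m)^2 / \bigl((1-z^{m+1})(1-2z+z^{m+1})\bigr)$, which I would do by clearing denominators and checking equality of polynomials (e.g.\ by comparing the factorization of $m z^{m+1}-(m+1)z^m+1$, which has a double root at $z=1$, against $(z-1)^2(1+2z+3z^2+\cdots)$-type expansions).
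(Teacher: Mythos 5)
There is a genuine gap: your run decomposition mishandles the cyclic boundary condition, and the closed form you arrive at is not $F_m(z)$. The quantity $W_m(n)$ counts linear words of length $n$ whose \emph{cyclic} reading has all runs of length at most $m$; the marked starting position of such a word need not sit at the beginning of a cyclic run. When it falls in the interior of a run, the linear word has an \emph{odd} number of blocks, its first and last blocks share a color, and the constraint is that the \emph{sum} of their lengths is at most $m$ (they merge into a single cyclic run). Your argument ``reading linearly from the start of a chosen run gives $2\ell$ alternating runs'' silently discards all of these words, so your formula $2R(z)^2/(1-R(z)^2) = 2z^2(1-z^m)^2/\bigl((1-z^{m+1})(1-2z+z^{m+1})\bigr)$ is only the even-block contribution. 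Concretely, for $m=2$, $n=4$ your series gives $4$, but $W_2(4)=6$: you are missing $(0,1,1,0)$ and $(1,0,0,1)$, which have three linear blocks each.

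The paper's proof uses exactly your block decomposition but adds the odd-block case, where the wrap-around pair of first and last blocks contributes the factor $z^2 + 2z^3 + \cdots + (m-1)z^m$ (the number of ordered pairs of positive block lengths summing to $j \leq m$ being $j-1$), multiplied by an odd power of $R(z)$ for the interior blocks. Consequently your final ``routine bookkeeping'' step cannot succeed: $F_m(z)$ is genuinely different from your expression, and the identity you would need, $mz^{m+1}-(m+1)z^m+1 = (1-z)(1-z^m)$, is false for $m \geq 2$ (compare the coefficients of $z$). The discrepancy $F_m(z) - 2R(z)^2/(1-R(z)^2)$ is precisely the odd-block generating function
\[
    \frac{2\,(z^2 + 2z^3 + \cdots + (m-1)z^m)\,R(z)}{1 - R(z)^2},
\]
which you omitted. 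Adding this term and simplifying recovers $F_m(z)$; without it the lemma is not proved.
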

\begin{proof}
    Every non-constant sequence can be decomposed into blocks of the same color such that adjacent blocks have different colors.
    For example, $(0,1,1,0,1,1,1,0,0)$ has $5$ blocks of sizes $1$, $2$, $1$, $3$, $2$, respectively.
    For non-constant $m$-sequences, they have a minimum of $2$ blocks, with each block size being bounded by $m$.
    If the first and the last block have the same color, the sum of their size is at most $m$.
    In binary sequences, the color of the first block determines the colors of the following ones, and the first and last block share the same color if and only if the number of blocks is odd.  

    Therefore, the generating function of non-constant $m$-sequences with even number of blocks is
    \[
        2\sum_{k=1}^{\infty} (z + z^2 + \cdots + z^m)^{2k}
        =
        2\sum_{k=1}^{\infty} \left( z \frac{1-z^m}{1-z} \right)^{2k}
        =
        -\frac{2z^2 (1 - z^m)^2}{(z^{m+1} - 1) (z^{m+1} - 2z + 1)},
    \]
    and the generating function of non-constant $m$-sequences with odd number of blocks is
    \begin{multline*}
        2\sum_{k=0}^{\infty}
        (z + z^2 + \cdots + z^m)^{2k+1} (z^2 + 2z^3 + \cdots + (m-1) z^m) \\
        =
        \frac{2z^2 ((m-1)z^m - mz^{m-1} + 1)}{(1-z)^2} \sum_{k=0}^{\infty} \left( z \frac{1-z^m}{1-z} \right)^{2k+1} \\
        =
        -\frac{2z^3 (z^m - 1) ((m-1)z^m - mz^{m-1} + 1)}{(z-1)(z^{m+1}-1) (z^{m+1} - 2z + 1)}.
    \end{multline*}
    Summing the two functions yields the lemma.
\end{proof}

\begin{proof}[Proof of Proposition~\ref{prop:Nec}]
    Let $\PSeq_m(z)$ denote the generating function of primitive $m$-sequences.
    By construction,
    \[
        \Seq_m(z)
        =
        \sum_{k=1}^{\infty} \PSeq_m(z^k).
    \]
    Note that this does not hold if constant sequences are included.
    Now it follows from the Möbius inversion formula and Lemma~\ref{lem:Seq} that
    \[
        \PSeq_m(z)
        =
        \sum_{k=1}^{\infty} \mu(k) \, \Seq_m(z^k)
        =
        \sum_{k=1}^{\infty} \mu(k) \, F_m(z^k)
    \]
    where $\mu$ is the Möbius function.

    Let $\PNec(z)$ be the generating function of primitive $m$-necklaces.
    We introduce an auxiliary variable $u$, and consider the bivariate generating functions $\PSeq(z,u) \coloneqq \PSeq(zu)$ and $\PNec(z,u) \coloneqq \PNec(zu)$.
    Observe that the primitive cycles of length $k$ and primitive sequences of length $k$ are in $1$-to-$k$ correspondence.
    Thus, in terms of generating functions, $\PNec(z, u)$ can be obtained by applying the transformation $u^k \mapsto u^k/k$ to $\PSeq(z, u)$, and equivalently,
    \[
        \PNec_m(z, u)
        =
        \int_0^u \PSeq_m(z, x) \, \frac{dx}{x}
        =
        \sum_{i=1}^{\infty} \mu(i) \int_0^u
        F_m((xz)^i) \, \frac{dx}{x}.
    \]
    Therefore, we obtain the formula
    \[
        \Nec_m(z, u)
        =
        \sum_{j=1}^{\infty} \PNec_m(z^j, u^j)
        =
        \sum_{j=1}^{\infty}
        \sum_{i=1}^{\infty} \mu(i) \int_0^{u^j}
        F_m((xz^j)^i) \, \frac{dx}{x}.
    \]
    Now the result follows by setting $u=1$.
\end{proof}

\section{Singularity analysis}
In this section, we perform the singularity analysis to track down the asymptotic behavior of $A_m(n)$ and $B_m(n)$.
Roughly speaking, rather than consider $\Nec_m(z)$ as a formal power series, we view it as a complex function.
Then, the asymptotic behavior of the coefficients $\hat{A}_m(n)$ of $\Nec_m(z)$ can be understood by analysing the behavior of $\Nec_m(z)$ near its singularities.
For details about this method, we recommend \cite[Chapter~VI]{FlajoletSedgewick}.

\begin{remark}
    Our approach is slightly different from the standard one introduced in \cite[Chapter~VI]{FlajoletSedgewick} as we take into account not only the principle singularities but also the minor singularities to obtain the complete asymptotic expansion \eqref{eq:Aasymp} and \eqref{eq:Basymp}.
\end{remark}

To prepare for it, let us begin with the following lemma which follows directly from results by Miles \cite{Miles}, Miller \cite{Miller}, and \cite{Wolfram}.

\begin{lemma}[{\cite{Miles, Miller, Wolfram}}] \label{lem:r}
    For any $m \in \ZZ_{\geq 2}$,
    apart from the trivial solution $z=1$, the polynomial equation
    \begin{equation} \label{eq:eq}
        1 - 2z + z^{m+1} = 0
    \end{equation}
    has exactly one positive real solution $r_m$ which is simple and lies in the interval $(1/2, 1)$.
    All other solutions have modulus strictly greater than $1$.
\end{lemma}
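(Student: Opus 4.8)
The plan is to show that the polynomial $p(z) = z^{m+1} - 2z + 1$ has exactly one real root in $(1/2,1)$ (apart from the obvious root $z=1$), and that every other root has modulus $>1$. I would note first that $z=1$ is always a root, so $p(z) = (z-1) q(z)$ where $q(z) = z^m + z^{m-1} + \dots + z^2 + z - 1$; the roots in question are exactly the roots of $q$. The positive real root: evaluate $q(1/2) = (1/2)^m + \dots + (1/2) - 1 = 1 - (1/2)^m - 1 = -(1/2)^m < 0$ (summing the geometric series $\sum_{k=1}^m 2^{-k} = 1 - 2^{-m}$), and $q(1) = m - 1 > 0$ for $m \geq 2$, so by the intermediate value theorem there is a root in $(1/2,1)$. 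Since $q$ has positive derivative on $(0,\infty)$ (all coefficients of $q'$ are nonnegative and not all zero), $q$ is strictly increasing there, so this root $r_m$ is unique among positive reals and is simple. Simplicity as a root of $p$ follows since $z=1$ is a distinct root.

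**The modulus bound** is the main obstacle and is where one invokes Miles, Miller, and the \texttt{Wolfram} reference. The cleanest self-contained route is the following: suppose $q(z_0) = 0$ with $|z_0| \leq 1$ and $z_0 \neq r_m$. From $q(z) = 0$ we get the fixed-point form $z_0 = 1 - z_0^2 - z_0^3 - \dots - z_0^m$, equivalently $z_0(1 + z_0 + \dots + z_0^{m-1}) = 1$, i.e. $z_0 \cdot \frac{z_0^m - 1}{z_0 - 1} = 1$ when $z_0 \neq 1$, which rearranges to $z_0^{m+1} - 2z_0 + 1 = 0$ again — so I'd instead argue directly with Rouché or with a careful triangle-inequality/equality-case analysis. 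Write $z_0^{m+1} = 2z_0 - 1$. If $|z_0| \leq 1$ then $|z_0^{m+1}| \leq |z_0| \leq 1$, while I claim $|2z_0 - 1| \geq 1$ forces $\operatorname{Re}(z_0) \geq |z_0|^2$, hence in the disk this pins $z_0$ near the positive real axis; pushing the inequalities (using that $|z_0|^{m+1} \le |z_0|$ with equality iff $|z_0| \in \{0,1\}$) should corner all solutions in $|z_0| \le 1$ to be $z_0 = r_m$ or $z_0 = 1$, a contradiction. The honest thing, given the excerpt explicitly cites \cite{Miles, Miller, Wolfram}, is to state that these references establish precisely the location of the roots of the "multinacci"/generalized-Fibonacci characteristic polynomial $q$ — that $q$ has a single root (the Pisot number-like dominant root $r_m^{-1}$ after reciprocation, matching $\alpha_m$ up to the substitution $z \leftrightarrow 1/\alpha_m$) inside the unit disk and all others outside — and quote it.

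**Concretely**, I would carry out the steps in this order: (1) factor $p(z) = (z-1)q(z)$ and reduce everything to $q$; (2) prove existence, uniqueness, and simplicity of the positive real root $r_m \in (1/2,1)$ of $q$ via monotonicity and the sign computation $q(1/2) = -2^{-m} < 0 < m-1 = q(1)$; (3) observe the reciprocal relation: if $q(z) = 0$ then dividing by $z^m$ shows $1/z$ satisfies $1 + (1/z) + \dots + (1/z)^{m-1} - (1/z)^m = 0$, i.e. $w^m - w^{m-1} - \dots - w - 1 = 0$ with $w = 1/z$, which is exactly the defining equation of $\alpha_m$ from the introduction; hence $r_m = 1/\alpha_m$ and the roots of $q$ are the reciprocals of the Galois conjugates of $\alpha_m$; (4) invoke that $\alpha_m$ is a Pisot number (all conjugates of modulus $<1$, as stated in the introduction and proved in Miles/Miller), so all conjugates $\neq \alpha_m$ have modulus $<1$, whence their reciprocals — the non-real and non-$r_m$ roots of $q$ — have modulus $>1$. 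This reduces the whole lemma to the already-quoted Pisot property of $\alpha_m$.

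**The hard part** is genuinely just the modulus statement, and I expect to handle it by citation rather than reproving the Pisot property; the remaining work — the factorization, the monotonicity of $q$ on $(0,\infty)$, the two sign evaluations, and the reciprocal change of variable $w = 1/z$ matching $q$ to the multinacci polynomial — is elementary and quick. One subtlety worth a sentence: one must check $r_m$ is strictly less than $1$ (immediate from $q(1) = m-1 > 0$ and $q$ increasing) and strictly greater than $1/2$ (from $q(1/2) < 0$), and that $z = 1$ is a simple root of $p$, i.e. $q(1) \neq 0$, which again is just $m - 1 \neq 0$ for $m \geq 2$.
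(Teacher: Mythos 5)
Your proposal is correct and follows essentially the same route as the paper: factor out $(z-1)$, reduce to $q(z)=z^m+\cdots+z-1$, reciprocate via $w=1/z$ to the generalized Fibonacci polynomial $w^m-w^{m-1}-\cdots-1$, and cite Miles/Miller/Wolfram for the location of its roots. Your additional elementary verification of the positive real root in $(1/2,1)$ (the sign computation $q(1/2)=-2^{-m}<0<m-1=q(1)$ together with monotonicity of $q$ on $(0,\infty)$) is a correct, self-contained supplement to what the paper simply draws from the citation; the inconclusive triangle-inequality digression in your middle paragraph is harmless since you ultimately fall back on the cited Pisot property, exactly as the paper does.
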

\begin{proof}
    Factoring $(z-1)$ from $1-2z+z^{m+1}$ yields
    \[
        1-2z+z^{m+1}
        =
        (z-1)(z^m + z^{m-1} + \cdots + z - 1).
    \]
    After the change of variables $z = 1/y$, the equation $z^m + z^{m-1} + \cdots + z - 1 = 0$
    becomes
    \begin{equation} \label{eq:Fibo}
        y^m - y^{m-1} - \cdots - 1 = 0.
    \end{equation}
    Now by \cite{Miles}, \cite{Miller}, or \cite[Lemma~3.6]{Wolfram}, equation \eqref{eq:Fibo} has a unique positive real solution in the interval $(1, 2)$, and all other solutions have modulus strictly greater than $1$.
    The lemma follows.
\end{proof}

\begin{figure}[ht]
    \centering
    \begin{subfigure}{.3\textwidth}
        \centering
        \includegraphics{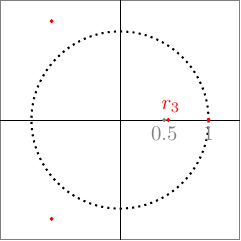}
        \caption{$m=3$}
    \end{subfigure}
    \begin{subfigure}{.3\textwidth}
        \centering
        \includegraphics{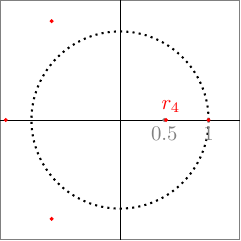}
        \caption{$m=4$}
    \end{subfigure}
    \begin{subfigure}{.3\textwidth}
        \centering
        \includegraphics{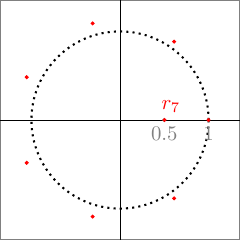}
        \caption{$m=7$}
    \end{subfigure}
    \caption{Roots of $1-2z+z^{m+1}$}
\end{figure}

\begin{remark}
    For $m=2,3$, we have exact formulas
    \[
        r_2
        =
        \frac{\sqrt{5}-1}{2},
        \qquad
        r_3
        =
        \frac{\sqrt[3]{3\sqrt{33}+17}}{3} - \frac{2}{3\sqrt[3]{3\sqrt{33} + 17}} - \frac{1}{3}.
    \]
    The exact expression of $r_4$ is already too lengthy to present here.
    Numerically,
    \[
        r_2 \approx 0.6180,
        \quad
        r_3 \approx 0.5437,
        \quad
        r_4 \approx 0.5188,
        \quad
        r_5 \approx 0.5087,
        \quad
        r_7 \approx 0.5020,
        \quad
        r_{10} \approx 0.5002.
    \]
    Asymptotically, when $m \to \infty$, we have $r_m \to 1/2$.
\end{remark}

\noindent\textbf{Notation.}
In order to simplify our notation, in the remainder of this section we 
write $r$ for $r_m$, $\alpha$ for $\alpha_m$, and  fix $m \in \ZZ_{\geq 2}$.

The idea is the following:
we write $\Nec_m(z)$ (and $\PNec_m(z)$) as the sum of two functions.
The first is a standard function that accounts for the main terms in the asymptotic expansion \eqref{eq:Aasymp}.
The other function corresponds to the error term in \eqref{eq:Aasymp}, and all its singularities are located farther from the origin than those of the first function.

Recall that we denote by $\mu$ the Möbius function, and by $\varphi$  Euler's totient function.
\begin{lemma} \label{lem:Nec2}
    For any $m \in \ZZ_{\geq 2}$, we have formulas
    \begin{equation}
        \Nec_m(z)
        =
        \sum_{k=1}^{\infty} \frac{\varphi(k)}{k} \log \frac{1}{1 - z^{k} / r}
        +
        h(z)
    \end{equation}
    and
    \begin{equation}
        \PNec_m(z)
        =
        \sum_{k=1}^{\infty} \frac{\mu(k)}{k} \log \frac{1}{1 - z^{k} / r}
        +
        h_{1}(z)
    \end{equation}
    where
    \begin{equation} \label{eq:h}
        h(z)
        \coloneqq
        \sum_{j=1}^{\infty} h_{j}(z),
        \qquad
        h_{j}(z)
        \coloneqq
        \sum_{i=1}^{\infty}
        \mu(i)
        \int_0^1 x^{i-1}z^{ij} \, f_m((x z^j)^i) \, dx,
    \end{equation}
    and
    \begin{equation} \label{eq:fm}
        f_m(z)
        \coloneqq
        \frac{2z(1-z^m)(mz^{m+1} - (m+1)z^m + 1) + (z-1)^2 (z^{m+1} - 1) \, \psi_m(z)}{(z - 1) (z^{m+1} - 1) (z^{m+1} - 2z + 1)}
    \end{equation}
    where $\psi_m$ is defined to be the unique polynomial such that
    \[
        t^{m+1} - 2t + 1
        =
        (t-1)(t^{m} + \cdots + t - 1)
        =
        (1-t) \cdot (r-t) \cdot \psi_m(t).
    \]
\end{lemma}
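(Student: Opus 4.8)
The plan is to reduce both identities to the single rational-function identity
\[
    f_m(z)
    =
    \frac{F_m(z)}{z} + \frac{1}{z - r},
\]
after which the formulas of Proposition~\ref{prop:Nec} collapse to the asserted ones by a change of variables inside each integral. To get this identity I would first make sense of $\psi_m$: by Lemma~\ref{lem:r} the number $r$ is a \emph{simple} root of $z^{m} + z^{m-1} + \cdots + z - 1$, so $\psi_m(t) \coloneqq (t^{m} + \cdots + t - 1)/(t - r)$ is an honest polynomial of degree $m-1$ and $z^{m+1} - 2z + 1 = (z-1)(z-r)\,\psi_m(z) = (1-z)(r-z)\,\psi_m(z)$. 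Then, since $F_m(z)/z$ has the same denominator $(z-1)(z^{m+1}-1)(z^{m+1}-2z+1)$ as $f_m$ in \eqref{eq:fm} but numerator only the first summand $2z(1-z^m)(mz^{m+1}-(m+1)z^m+1)$, the difference $f_m(z) - F_m(z)/z$ equals
\[
    \frac{(z-1)^2 (z^{m+1}-1)\,\psi_m(z)}{(z-1)(z^{m+1}-1)(z^{m+1}-2z+1)}
    =
    \frac{(z-1)\,\psi_m(z)}{z^{m+1}-2z+1},
\]
and plugging in the factorization of $z^{m+1}-2z+1$ and cancelling $(1-z)\psi_m(z)$ leaves $1/(z-r)$. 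This is the one place that requires genuine care: it is routine polynomial bookkeeping, but one must track the signs coming from $(z-1)$ versus $(1-z)$ and from $(z-r)$ versus $(r-z)$, and it is where the construction is really "designed", since $1/(z-r)$ is precisely the correction cancelling the simple pole of $F_m$ at $r$.

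With the identity in hand, I would rewrite it as $F_m(y) = y\,f_m(y) - y/(y-r)$ and substitute $y = (xz^{j})^{i}$ into the integrand of \eqref{eq:Nec}, obtaining
\[
    \frac{F_m((xz^{j})^{i})}{x}
    =
    x^{i-1} z^{ij}\, f_m\!\big((xz^{j})^{i}\big)
    -
    \frac{x^{i-1} z^{ij}}{(xz^{j})^{i} - r}.
\]
Integrating the first summand over $x \in (0,1)$ reproduces exactly the summand defining $h_j(z)$ in \eqref{eq:h}. For the second, the substitution $u = (xz^{j})^{i}$ gives $\int_0^1 \frac{x^{i-1}z^{ij}}{(xz^{j})^{i}-r}\,dx = \frac1i\int_0^{z^{ij}}\frac{du}{u-r} = \frac1i\log(1 - z^{ij}/r)$, so its (negated) contribution is $\tfrac1i\log\frac{1}{1-z^{ij}/r}$.

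For $\PNec_m$ only $j=1$ occurs, and summing over $i$ against $\mu(i)$ immediately yields $\PNec_m(z) = \sum_{i\ge1}\frac{\mu(i)}{i}\log\frac{1}{1-z^{i}/r} + h_1(z)$. For $\Nec_m$ I would then sum over $j \ge 1$: the $f_m$-part assembles into $h(z) = \sum_j h_j(z)$ by the definition \eqref{eq:h}, and the logarithmic part becomes $\sum_{j\ge1}\sum_{i\ge1}\frac{\mu(i)}{i}\log\frac{1}{1-z^{ij}/r}$; regrouping by $k = ij$ and invoking the classical identity $\sum_{i\mid k}\frac{\mu(i)}{i} = \frac{\varphi(k)}{k}$ turns this into $\sum_{k\ge1}\frac{\varphi(k)}{k}\log\frac{1}{1-z^{k}/r}$, which is the first claimed formula.

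Finally I would justify the manipulations: reading $\log\frac{1}{1-w}$ through its power series $\sum_{n\ge1}w^n/n$, every series appearing (the integrands, the sums over $i$ and $j$, the regrouping of the double sum) converges absolutely for $z$ in a neighbourhood of the origin, so all of the above are identities of holomorphic functions near $0$, hence identities of formal power series. The only substantive step is the algebraic identity $f_m = F_m(z)/z + 1/(z-r)$ of the first paragraph; everything after it is the change of variables $u=(xz^j)^i$ together with the totient/Möbius bookkeeping.
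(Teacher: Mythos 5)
Your proposal is correct and follows essentially the same route as the paper: the paper's proof also starts from Proposition~\ref{prop:Nec}, splits off the logarithm via the integral identity $\log\frac{1}{1-z^{ij}/r}=\int_0^1\frac{1}{x}\frac{y}{r-y}\,dx$ with $y=(xz^j)^i$, and the remaining integrand is exactly your partial-fraction identity $f_m(y)=F_m(y)/y+1/(y-r)$ written with the fractions combined over the common denominator. Your explicit isolation of that identity, the sign check through $(1-t)(r-t)\psi_m(t)$, and the $k=ij$ regrouping with $\sum_{d\mid k}\mu(d)/d=\varphi(k)/k$ all match the paper's argument.
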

\begin{proof}
    By Proposition~\ref{prop:Nec}, we have
    \begin{equation} \label{eq:NecCopy}
        \Nec_m(z)
        =
        \sum_{j=1}^{\infty}
        \sum_{i=1}^{\infty} \mu(i)
        \int_{0}^{1} 
        \frac{F_m((x z^j)^i)}{x} \, dx
    \end{equation}
    where
    \[
        F_m(z)
        \coloneqq
        \frac{2 z^2 (1 - z^m) (m z^{m+1} -(m+1)z^m + 1)}{(z-1) (z^{m+1} - 1) (z^{m+1} - 2z + 1)}.
    \]
    Write $y \coloneqq (x z^j)^i$.
    Using the identity
    \[
        \log \frac{1}{1 - z^{ij}/r}
        =
        \int_0^1 \frac{i x^{i-1} z^{ij}}{r - x^i z^{ij}} \, dx,
        =
        \int_0^1 \frac{1}{x} \frac{y}{r-y},
    \]
    the integral that appears in the right-hand side of \eqref{eq:NecCopy} can be written as
    \[
        \int_{0}^{1} F_m(y) \, \frac{dx}{x}
        =
        \frac{1}{i} \log \frac{1}{1-z^{ij}/r}
        +
        \int_0^1 \left( \frac{F_m(y)}{x} - \frac{1}{x} \frac{y}{r - y} \right) dx.
    \]
    This can be further rewritten as
    \begin{multline*}
        \int_{0}^{1} F_m(y) \, \frac{dx}{x}
        =
        \frac{1}{i} \log \frac{1}{1-z^{ij}/r} \\
        +
        \int_0^1 x^{i-1}z^{ij} \, \frac{2y(1-y^m)(my^{m+1} - (m+1)y^m + 1) + (y-1)^2 (y^{m+1} - 1) \, \psi_m(y)}{(y - 1) (y^{m+1} - 1) (y^{m+1} - 2y + 1)} \, dx \\
        =
        \frac{1}{i} \log \frac{1}{1-z^{ij}/r}
        +
        \int_0^1 x^{i-1}z^{ij} \, f_m((x z^j)^i) \, dx.
    \end{multline*}
    Thus, the generating function $\Nec_m(z)$ equals
    \begin{equation} \label{eq:Nec2}
        \Nec_m(z)
        =
        \sum_{j=1}^{\infty}
        \sum_{i=1}^{\infty}
        \frac{\mu(i)}{i} \log \frac{1}{1 - z^{ij} / r}
        +
        \sum_{j=1}^{\infty} h_{j}(z) 
        =
        \sum_{k=1}^{\infty} \frac{\varphi(k)}{k} \log \frac{1}{1 - z^{k} / r}
        +
        h(z)
    \end{equation}
    where we have used, in the second equality, the identity (see \cite[Section~16.3]{HardyWright} for a proof)
    \[
        \sum_{d \mid k} \frac{\mu(d)}{d} = \frac{\varphi(k)}{k}
    \]
    and the fact that summing over the indices $i,j$ is the same as summing over $k$ and its divisors.

    Similarly, the generating function~\eqref{eq:PNec} can be rewritten as
    \begin{equation} \label{eq:PNec2}
        \PNec_m(z)
        =
        \sum_{k=1}^{\infty} \frac{\mu(k)}{k} \log \frac{1}{1 - z^{k} / r}
        +
        h_{1}(z)
    \end{equation}
    as claimed.
\end{proof}

The following lemma shows that the rational function $f_m(z)$ is holomorphic in $|z| < 1$.
\begin{lemma} \label{lem:sep}
    Let $r = r_m$ be as in Lemma~\ref{lem:r}.
    We have
    \[
        2r(1-r^m)(mr^{m+1} - (m+1) r^m + 1) + (r-1)^2 (r^{m+1} - 1) \, \psi_m(r)
        =
        0.
    \]
    In other words, the numerator of the rational function $f_m$ has $r$ as a root.
\end{lemma}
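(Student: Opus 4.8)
The plan is to make the value $\psi_m(r)$ explicit, substitute it into the numerator of $f_m$, and then collapse the resulting expression using the single relation $r^{m+1}=2r-1$ that defines $r=r_m$.

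First I would read off $\psi_m(r)$ from the factorization $t^{m+1}-2t+1=(1-t)(r-t)\,\psi_m(t)$. Writing $P(t)\coloneqq t^{m+1}-2t+1$ and $Q(t)\coloneqq(1-t)(r-t)$, differentiating the identity $P=Q\psi_m$ and evaluating at $t=r$, where $Q(r)=0$, gives $P'(r)=Q'(r)\,\psi_m(r)$. Since $P'(r)=(m+1)r^m-2$ and $Q'(r)=r-1$, and $r\in(1/2,1)$ by Lemma~\ref{lem:r} so in particular $r\neq 1$, this yields
\[
    \psi_m(r)=\frac{(m+1)r^m-2}{r-1}.
\]

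Next I would substitute this into the numerator
$N(z)\coloneqq 2z(1-z^m)\big(mz^{m+1}-(m+1)z^m+1\big)+(z-1)^2(z^{m+1}-1)\,\psi_m(z)$
of the rational function $f_m$. Cancelling one factor $(r-1)$ in the second term turns the desired identity $N(r)=0$ into
\[
    2r(1-r^m)\big(mr^{m+1}-(m+1)r^m+1\big)=-(r-1)(r^{m+1}-1)\big((m+1)r^m-2\big).
\]
I would then invoke $r^{m+1}=2r-1$ — equivalently $r^m=(2r-1)/r$, whence $1-r^m=(1-r)/r$ and $r^{m+1}-1=2(r-1)$ — to eliminate every power of $r$ above the first on both sides. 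Each side then acquires the common factor $2(1-r)/r$ (up to sign), and what is left is a quadratic polynomial identity in $r$ that one verifies by direct expansion; it holds identically, so $N(r)=0$.

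The computation is entirely elementary, so I do not anticipate a real obstacle; the one delicate point is sign bookkeeping — the factorization conventions for $\psi_m$, together with $(1-z)(z-1)=-(z-1)^2$ and the clearing of denominators — which must be tracked carefully so that the final cancellation is exact rather than off by a sign. As a cross-check, and an alternative route, one may note that $f_m(z)=F_m(z)/z-1/(r-z)$ by construction, so the statement is equivalent to $\operatorname{Res}_{z=r}\!\big(F_m(z)/z\big)=-1$; since $r$ is a simple zero of $z^{m+1}-2z+1$ by Lemma~\ref{lem:r}, the residue formula turns this into the very same identity.
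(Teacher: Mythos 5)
Your proposal is correct and follows essentially the same route as the paper: both extract $(r-1)\,\psi_m(r)=(m+1)r^m-2$ from the derivative of $t^{m+1}-2t+1$ at its simple root $r$, and then collapse the numerator using $r^{m+1}=2r-1$. The only (cosmetic) difference is the final algebraic step, where the paper adds $r^{m+1}-2r+1=0$ to factor $mr^{m+1}-(m+1)r^m+1$ directly, while you eliminate the high powers of $r$ and verify the resulting quadratic identity by expansion; both computations check out.
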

\begin{proof}
    Since $r^{m+1} - 2r + 1 = 0$,
    we have
    \[
        r(1-r^m)
        =
        1-r,
        \qquad
        r^{m+1} - 1
        =
        2(r-1).
    \]
    Thus, it suffices to show that
    \[
        m r^{m+1} - (m+1) \, r^m + 1 - (r-1)^2 \, \psi_m(r)
        =
        0.
    \]
    Note that for any polynomial $P$, if $x_0$ is a simple root of $P$ and $P(x) = (x-x_0) \, Q(x)$, then $Q(x_0) = P'(x_0)$.
    By Lemma~\ref{lem:r}, $r$ is a simple root of $1- 2x + x^{m+1}$, and hence
    \[
        (r-1) \, \psi_m(r)
        =
        (m+1) \, r^m - 2.
    \]
    On the other hand, again by the fact that $r^{m+1} - 2r + 1 = 0$, we have
    \begin{multline*}
        m r^{m+1} - (m+1) r^m + 1 \\
        =
        m r^{m+1} - (m+1) r^m + 1 + r^{m+1} - 2r + 1
        =
        ((m+1)r^m - 2) (r-1),
    \end{multline*}
    and the lemma follows.
\end{proof}

\begin{figure}[ht]
    \centering
    \includegraphics[width=.5\textwidth]{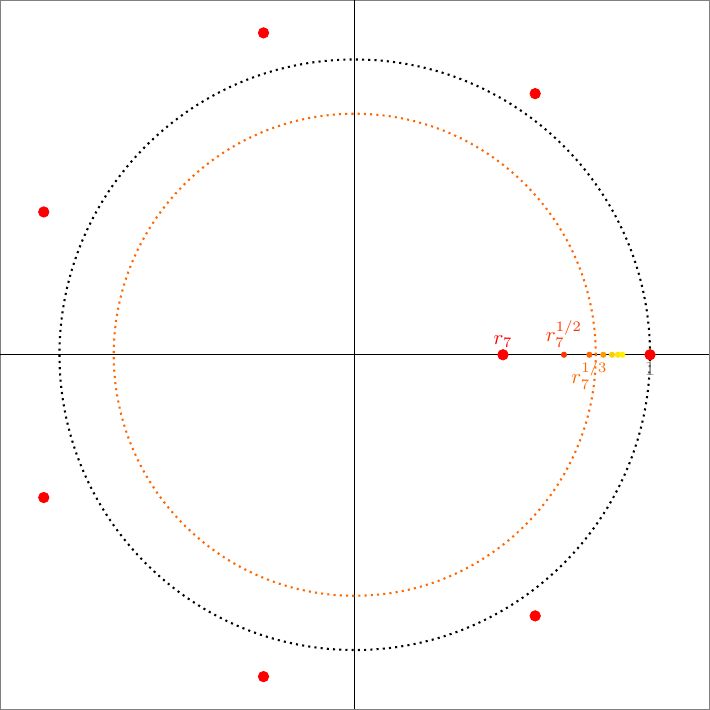}
    \caption{$g_3$ is holomorphic in the open disk of radius $r^{1/3}$}
\end{figure}

\begin{lemma} \label{lem:gk}
    For any $k \in \ZZ_{\geq 1}$, the functions defined by
    \begin{equation} \label{eq:g}
        g_k(z)
        \coloneqq
        \sum_{i=k}^{\infty} \frac{\varphi(i)}{i} \log\frac{1}{1-z^i/r},
        \qquad
        \tilde{g}_k(z)
        \coloneqq
        \sum_{i=k}^{\infty} \frac{\mu(i)}{i} \log\frac{1}{1-z^i/r},
        \qquad
    \end{equation}
    are holomorphic in the open disk $\{ z \in \CC : |z| < r^{1/k} \}$.
\end{lemma}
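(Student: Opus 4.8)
The plan is to invoke the Weierstrass $M$‑test: I will show that on each compact subset of $\{\,z\in\CC:|z|<r^{1/k}\,\}$ every summand of the series defining $g_k$ (resp.\ $\tilde g_k$) is holomorphic, and that the series is dominated there by a convergent series of constants; uniform convergence on compacta of holomorphic functions then gives holomorphy of the sum.

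First I would check the summands individually. Since $r\in(1/2,1)$ by Lemma~\ref{lem:r}, we have $r^{1/k}<1$, and for $|z|<r^{1/k}$ and $i\ge k$ one gets $|z^i/r|<r^{i/k}/r=r^{(i-k)/k}\le 1$, with strict inequality in all cases (for $i=k$ directly from $|z|<r^{1/k}$, for $i>k$ because then $r^{(i-k)/k}<1$). Hence $1-z^i/r$ lies in the open unit disk centred at $1$, on which $w\mapsto\log\frac{1}{1-w}$ equals the convergent power series $\sum_{n\ge 1}w^n/n$ and so is holomorphic; composing with $z\mapsto z^i/r$ shows each summand is holomorphic on $\{|z|<r^{1/k}\}$.

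Next, the domination. Fix a compact $K\subset\{|z|<r^{1/k}\}$ and choose $\rho$ with $\sup_{z\in K}|z|\le\rho<r^{1/k}$. Then for $z\in K$ and $i\ge k$ we have $|z^i/r|\le\rho^i/r=\rho^{i-k}\cdot(\rho^k/r)\le \rho^k/r=:c$, using $\rho<1$; moreover $c<1$ since $\rho^k<r$. Using the elementary bound $\bigl|\log\tfrac{1}{1-w}\bigr|\le\sum_{n\ge1}|w|^n/n\le |w|/(1-|w|)$ for $|w|<1$, together with $\varphi(i)/i\le 1$ (resp.\ $|\mu(i)/i|\le 1$), I obtain
\[
    \left|\frac{\varphi(i)}{i}\log\frac{1}{1-z^i/r}\right|
    \le
    \frac{1}{1-c}\cdot\frac{\rho^i}{r}
    \qquad\text{for all } z\in K,\ i\ge k .
\]
Since $\sum_{i\ge k}\rho^i/r=\rho^k/\bigl(r(1-\rho)\bigr)<\infty$, the $M$‑test applies and the series for $g_k$ converges uniformly on $K$; the identical bound handles $\tilde g_k$. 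As $K$ was arbitrary, both functions are holomorphic on $\{|z|<r^{1/k}\}$.

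There is no genuine obstacle here; the only points needing care are the bookkeeping with exponents of $r$ that guarantees $|z^i/r|<1$ strictly on the disk (so that the principal logarithm and the series estimate are legitimate) and the choice of the intermediate radius $\rho$ making the geometric majorant summable. Everything else is the standard Weierstrass argument.
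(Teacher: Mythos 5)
Your proof is correct and follows essentially the same route as the paper: both verify holomorphy of the summands, bound $\log\frac{1}{1-w}$ by a constant multiple of $|w|$ once $|z^i/r|$ is kept away from $1$, and apply the Weierstrass $M$-test with a geometric majorant to get uniform convergence on compacta. The only (immaterial) difference is the elementary inequality used to linearize the logarithm — you use $\log\frac{1}{1-x}\le \frac{x}{1-x}$, while the paper uses the monotonicity of $-\log(1-x)/x$ — and your parametrization by an arbitrary compact $K$ versus the paper's exhaustion by disks of radius $(ar)^{1/k}$.
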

\begin{proof}
    Let $a \in (0,1)$.
    For any $z \in \CC$ such that $|z| \leq (ar)^{1/k}$, we have $|z|^i/r \leq a$ for any $i \geq k$, and hence
    \[
        |g_k(z)|
        \leq
        \sum_{i=k}^{\infty} \log\frac{1}{1-|z|^i/r}
        \leq
        \left( \frac{1}{ar} \log\frac{1}{1-a} \right) \sum_{i=k}^{\infty} |z|^i
        \leq
        \left( \frac{1}{ar} \log\frac{1}{1-a} \right) \sum_{i=k}^{\infty} (ar)^{i/k}
        <
        \infty
    \]
    where we have used the fact that (since $-\log(1-x)/x$ is increasing on $(0,1)$) for any $0 < x \leq x_0 < 1$,
    \[
        \log\frac{1}{1-x}
        \leq
        \frac{x}{x_0} \log\frac{1}{1-x_0}.
    \]
    Therefore, it follows from the Weierstrass M-test that the partial sum under consideration converges uniformly in the compact disk $\{ z \in \CC : |z| \leq (ar)^{1/k}\}$, where $a \in (0,1)$ is arbitrary.
    This implies the assertion for $g_k$.
    The same argument applies to $\tilde{g}_k$ as well.
\end{proof}

\begin{lemma} \label{lem:h}
    The functions $h_{1}(z)$ and $h(z)$ defined in \eqref{eq:h} are holomorphic in the open unit disk $\{ z \in \CC : |z| < 1 \}$.
\end{lemma}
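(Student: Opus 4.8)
The plan is to show that the double series defining $h(z)$ (and its single-term specialization $h_1(z)$) converges uniformly on compact subsets of the open unit disk, using the holomorphy of $f_m$ inside $|z|<1$ established in Lemma~\ref{lem:sep}. First I would observe that by Lemma~\ref{lem:sep} the root $r$ of the denominator factor $z^{m+1}-2z+1$ is cancelled by the numerator of $f_m$; since $r\in(1/2,1)$ is the only root of $z^{m+1}-2z+1$ in the closed unit disk (Lemma~\ref{lem:r}), and since the factors $(z-1)$ and $(z^{m+1}-1)$ in the denominator of $f_m$ vanish only at roots of unity, after cancellation $f_m$ extends to a function holomorphic on a neighborhood of the closed unit disk — in particular it is bounded on the closed disk $|z|\le 1$. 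Let $M\coloneqq \sup_{|z|\le 1}|f_m(z)|<\infty$.

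Next I would estimate the inner integral. For $|z|\le\rho<1$ and $x\in[0,1]$ we have $|(xz^j)^i|\le\rho^{ij}\le 1$, so $|f_m((xz^j)^i)|\le M$, and therefore
\[
    \left| \int_0^1 x^{i-1} z^{ij} f_m((xz^j)^i)\,dx \right|
    \le
    M \rho^{ij} \int_0^1 x^{i-1}\,dx
    =
    \frac{M\rho^{ij}}{i}.
\]
Summing over $i\ge 1$ (using $|\mu(i)|\le 1$) gives $|h_j(z)|\le M\sum_{i\ge 1}\rho^{ij}/i = M\log\frac{1}{1-\rho^j}$. Since $\rho^j\to 0$, for $j$ large $\log\frac{1}{1-\rho^j}\le 2\rho^j$, so $\sum_{j\ge 1}|h_j(z)|$ is dominated by $C(\rho)<\infty$ uniformly on $|z|\le\rho$. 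By the Weierstrass $M$-test the double series converges absolutely and uniformly on $\{|z|\le\rho\}$ for every $\rho<1$; each summand is holomorphic there (the integrand is continuous in $(x,z)$ and holomorphic in $z$, so the integral is holomorphic by Morera or differentiation under the integral sign), and hence $h$ is holomorphic on $|z|<1$. The same bound with $j=1$ handles $h_1$.

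The main obstacle is the first step: verifying that $f_m$ is holomorphic, equivalently bounded, on a neighborhood of the entire closed unit disk, not merely the open disk. The potential singularities of $f_m$ on $|z|=1$ come from the zeros of $(z-1)$, $(z^{m+1}-1)$, and $(z^{m+1}-2z+1)$ lying on the unit circle. Lemma~\ref{lem:sep} disposes of $z=r$, but one must still confirm that the roots of unity appearing in the denominator are also cancelled by the numerator — or, more cleanly, recall that $f_m$ was constructed in Lemma~\ref{lem:Nec2} precisely as $F_m(y)/y - \frac{1}{r-y}$ (up to the factor $1/x$), i.e. as the difference of $\mathrm{Seq}_m$'s density and the logarithmic-singularity density, so its only pole in $|z|\le 1$ is at $z=r$ and that one is removed by Lemma~\ref{lem:sep}. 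Making this cancellation argument precise — tracking which numerator factor kills each root-of-unity denominator factor — is the only genuinely delicate point; everything after it is the routine $M$-test estimate above.
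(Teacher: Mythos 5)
Your overall strategy---bound $f_m$ on the relevant disk, then run the Weierstrass $M$-test first over $i$ and then over $j$---is exactly the paper's, and that part of your argument is fine. But the step you yourself single out as the ``only genuinely delicate point'' is resolved incorrectly: $f_m$ does \emph{not} extend holomorphically to a neighborhood of the closed unit disk, and $M \coloneqq \sup_{|z|\le 1}|f_m(z)|$ is $+\infty$. Lemma~\ref{lem:sep} only cancels the pole at $z=r$; the factor $z^{m+1}-1$ in the denominator is not cancelled at the $(m+1)$-th roots of unity $\omega \ne 1$. Indeed, using $\omega^{m+1}=1$, the numerator of $F_m$ at such an $\omega$ equals $2(m+1)\omega^2(1-\omega^m)^2 \ne 0$ (and likewise the second summand in the numerator of $f_m$ in \eqref{eq:fm} vanishes there while the first does not), so $F_m$, and hence $f_m(y)=F_m(y)/y - 1/(r-y)$, has genuine simple poles on the unit circle. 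For $m=2$ this is visible explicitly: $F_2(z) = -2z^2(1+z)(2z+1)/\bigl((z^2+z+1)(z^2+z-1)\bigr)$ has poles at the primitive cube roots of unity. Your claim that ``its only pole in $|z|\le 1$ is at $z=r$'' is therefore false.

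Fortunately the error is local and easily repaired, because your estimates never evaluate $f_m$ outside a disk of radius $\rho<1$: for $|z|\le\rho$, $x\in[0,1]$ and $i,j\ge 1$ one has $|(xz^j)^i|\le\rho^{ij}\le\rho$. So all you need is that $f_m$ is holomorphic in the \emph{open} unit disk---which does follow from Lemmas~\ref{lem:r} and~\ref{lem:sep}, since $r$ is the only zero of the denominator of modulus less than $1$---and then you may set $M=M(\rho)\coloneqq\sup_{|w|\le\rho}|f_m(w)|<\infty$. This is precisely what the paper does; with that one substitution the remainder of your proof (the bound $|h_j(z)|\le M\log\frac{1}{1-\rho^j}$ and the geometric-series domination of $\sum_j$) goes through and matches the paper's argument essentially line by line.
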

\begin{proof}
    It follows from Lemma~\ref{lem:r} and \ref{lem:sep} that the rational function $f_m(y)$ defined by \eqref{eq:fm} is holomorphic in the open disk $|y|<1$.
    Thus, for any $a \in (0,1)$, there exists $M = M(a)$ such that $|f_m(z)| \leq M$ for any $z \in \CC$ with $|z| \leq a$, and therefore, for any $j \in \ZZ_{\geq 1}$ and $|z| \leq a$,
    \begin{equation} \label{eq:hj}
        |h_{j}(z)|
        \leq
        M \sum_{i=1}^{\infty} |z|^{ij} \int_0^1 x^{i-1} \, dx
        =
        M \sum_{i=1}^{\infty} \frac{|z|^{ij}}{i}
        =
        M \log \frac{1}{1-|z|^j}
        <
        \infty.
    \end{equation}
    In particular, it follows that $h_1$ is holomorphic in $|z| < 1$.
    Now by \eqref{eq:hj}, for any $|z| \leq a$, we have
    \[
        |h(z)|
        \leq
        \sum_{j=1}^{\infty} |h_{j}(z)|
        \leq
        M \sum_{j=1}^{\infty} \log \frac{1}{1-|z|^j}
        \leq
        \frac{M}{a} \log\frac{1}{1-a} \sum_{j=1}^{\infty} |z|^{j}
        <
        \infty
    \]
    which shows that $h(z)$ is holomorphic in the open disk $|z|<1$.
\end{proof}

Now, we are ready to prove our main result on necklace counting.
\begin{proof}[Proof of Theorem~\ref{thm:main}]
    First observe that Proposition~\ref{prop:geodToNecklace} allows us to translate the low-lying geodesic counting problem to counting necklaces.
    By Lemma~\ref{lem:Nec2}, for any $k \in \ZZ_{\geq 1}$, the generating function \eqref{eq:Nec} can be written as
    \begin{multline*}
        \Nec_m(z)
        =
        \varphi(1) \log\frac{1}{1-z/r} + \cdots + \frac{\varphi(k)}{k} \log\frac{1}{1-z^{k}/r} \\
        + \frac{\varphi(k+1)}{k+1} \log\frac{1}{1-z^{k+1}/r}
        + g_{k+2}(z)
        + h(z)
    \end{multline*}
    where $g_{k+2}$ is defined by \eqref{eq:g}.
    For any integer $1 \leq d \leq k+1$, we have
    \[
        [z^n] \, \log\frac{1}{1-z^d/r}
        =
        \begin{cases}
            \displaystyle \frac{(1/r)^{n/d}}{n/d} = d \, \frac{\alpha^{n/d}}{n} & \text{if } d \mid n \\
            0 & \text{otherwise}
        \end{cases}
    \]
    where $\alpha \coloneqq 1/r$.
    On the other hand, by Lemma~\ref{lem:gk} and \ref{lem:h}, the function $g_{k+2} + h$ is holomorphic in $\{ z \in \CC : |z| < r^{1/(k+2)} \}$ which contains the disk $\{ z \in \CC : |z| \leq r^{1/(k+1)} \}$.
    Thus it follows from Cauchy's inequality that
    \[
        [z^n] \, (g_{k+2}(z) + h(z))
        \leq
        \rho^{-n} \, \sup_{|z| = \rho} |g_{k+2}(z) + h(z)|
        =
        \bO(\rho^{-n})
        =
        \lo(\alpha^{n/(k+1)}/n)
    \]
    for any $r^{1/(k+1)} < \rho < r^{1/(k+2)}$.
    This completes the proof of \eqref{eq:Aasymp}.

    To prove \eqref{eq:Basymp}, it is sufficient to write the generating function \eqref{eq:PNec2} as
    \begin{multline*}
        \PNec_m(z)
        =
        \mu(1) \log\frac{1}{1-z/r} + \cdots + \frac{\mu(k)}{k} \log\frac{1}{1-z^{k}/r} \\
        + \frac{\mu(k+1)}{k+1} \log\frac{1}{1-z^{k+1}/r}
        + \tilde{g}_{k+2}(z)
        + h_{1}(z).
    \end{multline*}
    Then the rest of the proof is similar to that of \eqref{eq:Aasymp}.
\end{proof}

\section{Numerical computations}
Necklaces of small sizes, say $n \leq 25$, can be generated and counted using the SageMath package \href{https://doc.sagemath.org/html/en/reference/combinat/sage/combinat/necklace.html}{\texttt{sage.combinat.necklace}}.
For larger sizes, the CPU time required noticeably increases, as the necklace count grows exponentially.
However, we can still efficiently compute $A_m(n)$ and $B_m(n)$ using Proposition~\ref{prop:Nec}.
We have verified that the two methods agree for $n \leq 25$.

When $n$ is prime, $A_m(n) = B_m(n)$, and both can be well approximated by $\alpha_m^{n}/n$.
For instance, $A_3(83) = 111384745483589787826$ and $\alpha_3^{83}/83 \approx 111384745483589787826.0120$.
\begin{table}[ht]
    \begin{tabular}{l l l l l} 
        \hline \hline
         & $n=10$ & $n=20$ & $n=40$ & $n=80$ \\
         \hline
        $n^{-1} (\alpha_m^n$ & $\mathbf{4}4.3$ & $\mathbf{98}16.5$ & $\mathbf{9636}46499.3$ & $\mathbf{18572291511}299245526.4$ \\
        $+ \alpha_m^{n/2}$ & $\mathbf{4}6.4$ & $\mathbf{98}38.7$ & $\mathbf{9636514}07.5$ & $\mathbf{185722915117810}68776.1$ \\
        $+ 2\alpha_m^{n/4}$ & $\mathbf{47}.3$ & $\mathbf{984}0.8$ & $\mathbf{9636514}29.7$ & $\mathbf{1857229151178107}3684.3$ \\
        $+ 4\alpha_m^{n/5})$ & $\mathbf{4}8.7$ & $\mathbf{984}3.1$ & $\mathbf{96365144}2.8$ & $\mathbf{185722915117810745}42.1$ \\
        \hline
        $A_m(n)$ & $\mathbf{47}$ & $\mathbf{9844}$ & $\mathbf{963651447}$ & $\mathbf{18572291511781074575}$ \\
        \hline \hline
    \end{tabular}

    \bigskip

    \begin{tabular}{l l l l l} 
        \hline \hline
         & $n=10$ & $n=20$ & $n=40$ & $n=80$ \\
         \hline
        $n^{-1} (\alpha_m^n$ & $\mathbf{4}4.3$ & $\mathbf{9}816.5$ & $\mathbf{96364}6499.3$ & $\mathbf{1857229151}1299245526.4$ \\
        $- \alpha_m^{n/2}$ & $\mathbf{42}.2$ & $\mathbf{9794}.3$ & $\mathbf{9636415}91.0$ & $\mathbf{18572291510817422}276.8$ \\
        $- \alpha_m^{n/5}$ & $\mathbf{4}1.9$ & $\mathbf{979}3.8$ & $\mathbf{96364158}7.7$ & $\mathbf{1857229151081742206}2.4$ \\
        $+ \alpha_m^{n/10})$ & $\mathbf{42}.0$ & $\mathbf{9794}.0$ & $\mathbf{963641588}.0$ & $\mathbf{18572291510817422064}.0$ \\
        \hline
        $B_m(n)$ & $\mathbf{42}$ & $\mathbf{9794}$ & $\mathbf{963641588}$ & $\mathbf{18572291510817422064}$ \\
        \hline \hline
    \end{tabular}
    \caption{$m=3$ with exact digits in boldface}
\end{table}

\end{document}